\newtheorem{te}{Theorem}[section]
\newtheorem{os}[te]{Remark}
\newtheorem{prop}[te]{Proposition}
\newtheorem{coro}[te]{Corollary}
\numberwithin{equation}{section}
\def \l { \left( }
\def \r {\right) }
\def \ll { \left\lbrace }
\def \rr { \right\rbrace }
\begin{document}

	\title[Counting processes with Bern\v{s}tein intertimes and random jumps]{Counting processes with Bern\v{s}tein intertimes and random jumps}
	\author{Enzo Orsingher}
	\author{Bruno Toaldo} 
	\address{Department of Statistical Sciences, Sapienza University of Rome}
	\email{enzo.orsingher@uniroma1.it} \email{bruno.toaldo@uniroma1.it}
	\keywords{L\'evy measure, Bern\v{s}tein functions, Subordinators, Negative binomial, Beta r.v.'s.}
	\date{\today}
	\subjclass[2010]{60G55, 60G50}

		\begin{abstract}
We consider here point processes $N^f(t)$, $t>0$, with independent increments and integer-valued jumps whose distribution is expressed in terms of Bern\v{s}tein functions $f$ with L\'evy measure $\nu$. We obtain the general expression of the probability generating functions $G^f$ of $N^f$, the equations governing the state probabilities $p_k^f$ of $N^f$, and their corresponding explicit forms. We also give the distribution of the first-passage times $T_k^f$ of $N^f$, and the related governing equation. We study in detail the cases of the fractional Poisson process, the relativistic Poisson process and the gamma-Poisson process whose state probabilities have the form of a negative binomial. The distribution of the times $\tau_j^{l_j}$ of jumps with height $l_j$ ($\sum_{j=1}^rl_j = k$) under the condition $N(t) = k$ for all these special processes is investigated in detail.
		\end{abstract}
	
	\maketitle

\section{Introduction}
In this paper we consider a class of point processes with stationary independent integer-valued increments of arbitrary range. These processes can be regarded as generalizations of the Poisson process where jumps can take any positive value. Furthermore, we shall show that these processes $N^f(t)$, $t>0$, can also be viewed as time-changed Poisson processes $N \l \, H^f(t) \, \r$ where $H^f(t)$ are subordinators associated with the Bern\v{s}tein function $f$ and independent from the homogeneous Poisson process $N$ with rate $\lambda >0$. The probabilistic behaviour of the processes $\mathcal{P}^f(t)$, with related counting processes $N^f(t)$, is described by the following properties.
\begin{enumerate}
\item[i)] $\mathcal{P}^f(t)$ has independent and stationary increments;
\item[ii)] \begin{align}
\Pr \ll N^f[t, t+dt) = k \rr \, = \, \begin{cases} dt \frac{\lambda^k}{k!} \int_0^\infty e^{-\lambda s} s^k \nu(ds)+o(dt), \qquad &k \geq 1, \\ 
1-dt \int_0^\infty \l 1-e^{-\lambda s} \r \nu(ds) + o(dt), & k=0,
\end{cases}
\label{11}
\end{align}
where
\begin{align}
f(\lambda) \, = \,  \int_0^\infty \l 1-e^{-\lambda s} \r \nu(ds)
\label{12}
\end{align}
is the integral representation of the Bern\v{s}tein functions. 
\end{enumerate}
The Bern\v{s}tein functions are $C^\infty$, non-negative and such that $(-1)^k \frac{d^k}{dx^k} f(x) \leq 0$, $k \geq 1$ (see, for example, \citet{librobern}). By $\nu$ we denote a non-negative L\'evy measure on the positive half-line such that
\begin{align}
\int_0^\infty \l s \wedge 1 \r \nu(ds) \, < \, \infty.
\end{align}
We often speak of $N^f(t)$, $t>0$, as generalized Poisson processes performing integer-valued jumps of arbitrary height. 

These processes can be used to model many different concrete and real phenomena. For example, if we consider the car accidents in the time interval $[0,t)$, the number of injured people in each crash can take any positive number. The number of clients appearing at any commercial center and arriving on different transport vehicles can also be modeled by a suitable counting process $N^f(t)$, $t>0$. Analogously, in floods or earthquakes, the number of destroyed buildings in each event can be clearly of arbitrary magnitude and thus can be represented by $N^f(t)$, $t>0$, with suitably chosen Bern\v{s}tein function $f$ and L\'evy measure $\nu$. 

The subordinators $H^f$ have Laplace transforms
\begin{align}
\mathds{E}e^{-\mu H^f(t)} \, = \, e^{-tf(\mu)} \, = \, e^{-t \int_0^\infty \l 1-e^{-s\mu} \r \nu(ds)}.
\label{15}
\end{align}
We observe that for
\begin{align}
\nu(ds) \, = \, \frac{\alpha s^{-\alpha -1}}{\Gamma(1-\alpha)} ds, \qquad \alpha \in (0,1),
\end{align}
we obtain the space-fractional Poisson process studied in \citet{orspolspl}, where $f(\mu) = \mu^\alpha$, $\alpha \in (0,1)$.
In this case, the subordinator corresponding to the space-fractional Poisson is a positively skewed stable process of order $\alpha$. If the L\'evy measure is the Dirac point mass at one, then the corresponding subordinated Poisson process is
\begin{equation}
N_1 \l N_2 (t) \r, \qquad t>0,
\end{equation}
where $N_i$, $i=1,2$, are independent homogeneous Poisson processes with rates $\lambda_i >0$. Such a process was investigated in \citet{orspoljsp} and recently also in \citet{dicre}.

The state probabilities $p_k^f(t) = \Pr \ll N^f(t) = k \rr$ are governed by difference-differential equations of the form
\begin{align}
\frac{d}{dt} p_k^f(t) \, = \, -f(\lambda) p_k^f(t) +\sum_{m=1}^k \frac{\lambda^m}{m!} p_{k-m}^f(t) \int_0^\infty e^{-s\lambda} s^m \nu(ds), \qquad k \geq 0, t>0,
\label{unosette}
\end{align}
with the usual initial conditions.
From \eqref{unosette} we extract the probability generating function $G^f(u, t)$ of $N^f(t)$ as \begin{align}
G^f(u, t) \, = \, e^{-tf \l \lambda (1-u) \r} \, = \, e^{-t \int_0^\infty \l 1-e^{-s\lambda(1-u)} \r} \nu(ds).
\end{align}
We prove also that
\begin{align}
\mathds{E}u^{N \l H^f(t) \r} \, = \, e^{-tf \l \lambda(1-u) \r}
\end{align}
and thus we show that
\begin{align}
N^f(t) \, \stackrel{\textrm{law}}{=} \, N \l H^f(t) \r.
\end{align}
By means of the shift operator $B^mp_k^f(t) = p_{k-m}^f(t)$, $0 \leq m \leq k$, we can rewrite equation \eqref{unosette} as
\begin{align}
\frac{d}{dt} p_k^f(t) \, = \, -f \l \lambda \l I-B \r \r p_k^f(t), \qquad t>0, k \geq 0,
\end{align}
which for $f(x) = x^\alpha$ coincides with the equation (2.4) of \citet{orspolspl}.
We also present a further representation of the generalized Poisson process $\mathcal{P}^f(t)$, $t>0$, as the scale limit of a continuous-time random walk with steps $X_j$ having distribution
\begin{align}
\Pr \ll X_j = k \rr \, = \, \frac{1}{u(n)} \int_0^\infty \Pr \ll N(s) = k \rr \,  \nu(ds), \qquad k \geq n \in \mathbb{N},
\label{unoundici}
\end{align}
where
\begin{align}
u(n) \, = \, \int_0^\infty \Pr \ll N(s) \geq n \rr \, \nu(ds).
\end{align}
For example, for the space-fractional Poisson process the distribution \eqref{unoundici} becomes
\begin{align}
\Pr \ll X_j = k \rr \, = \, \frac{\Gamma(k-\alpha) \big/ k!}{\sum_{j=n}^\infty \Gamma(j-\alpha)\big/ j!} , \qquad k \geq n.
\end{align}
We also consider the hitting-times
\begin{align}
T_k^f \, = \, \inf \ll t \geq 0 : N^f(t) \geq k \rr
\end{align}
and we show that
\begin{align}
\Pr \ll T_k^f \in ds \rr \big/ ds \, = \, -\frac{d}{ds} \sum_{l=0}^{k-1} \frac{(-\lambda)^l}{l!} \frac{d^l}{d\lambda^l} e^{-sf(\lambda)}.
\label{uno15}
\end{align}
We note that for $f(\lambda) = \lambda$ (case of the homogeneous Poisson process) formula \eqref{uno15} yields the Erlang distribution
\begin{equation}
\Pr \ll T_k \in ds \rr \, = \, \lambda^k e^{-\lambda s} \frac{s^{k-1}}{(k-1)!} ds, \qquad k \geq 1, s>0.
\end{equation}
The last part of the paper is devoted to three special cases, that is,
\begin{align}
f(\mu) \, = \, \begin{cases} \mu^\alpha, \, \alpha \in (0,1) &\textrm{ (space-fractional Poisson process),} \\
(\mu + \theta)^\alpha - \theta^\alpha, \, \alpha \in (0,1) &\textrm{ (tempered Poisson process),} \\
\log (1+\mu)  \,  &\textrm{ (negative binomial process).}
 \end{cases}
\end{align}
We obtain explicitly the probability distribution $p_k^f(t)$, $k \geq 0$, in the three cases above. Furthermore, we are able to obtain the conditional distributions, for $0<t_1 < \dots < t_r < t$,
\begin{align}
\Pr \ll \bigcap_{j=1}^r \ll \tau_j^{l_j} \in dt_j \rr \bigg| N^f(t) = k \rr
\end{align}
(where $\tau_j^{l_j}$ are the instants of occurence of the $j$-th Poisson event with size $l_j$) for $f(\mu) = \mu^\alpha$, $f(\mu) = \log(1+\mu)$. The tempered Poisson process has finite moments (unlike the space-fractional Poisson process) as well as the negative binomial of which many particular distributions can be explicitely evaluated.

The Poisson process and the negative binomial processes have been generalized in many directions (see, for example, \citet{beghinbin, brix, cahoy, dicre, vella}). The processes analyzed here include some processes which have appeared recently in the literature, but not the time-fractional Poisson process (which is a renewal process with non-independent increments, see \citet{kreer, kumar, laskin, meerpoisson}).

\section{General results}
We now examine in detail the main properties of the process $N^f(t)$, $t>0$, with independent increments outlined in the introduction. Our first result is the difference-differential equations governing their state probabilities
\begin{align}
p_k^f(t) \, = \, \Pr \ll N^f(t) = k \rr, \qquad k \geq 0.
\end{align}
\begin{te}
The probabilities $p_k^f(t) = \Pr \ll N^f(t) = k \rr$, $k \geq 0$, are solutions to the equation
\begin{align}
\frac{d}{dt}p_k^f(t) \, = \, -f(\lambda) p_k^f(t) + \sum_{m=1}^k \frac{\lambda^m}{m!} p_{k-m}^f (t) \int_0^\infty e^{-s\lambda} s^m \nu (ds), \qquad k \geq 0, t>0,
\label{21}
\end{align}
with initial condition
\begin{equation}
p_k^f(0) \, = \, 
\begin{cases}
1, \qquad & k = 0\\
0, & k \geq 1.
\end{cases}
\end{equation}
The p.g.f. $G^f(u, t) = \mathds{E}u^{N^f(t)}$, $|u|<1$, satisfies the linear, homogeneous equation
\begin{align}
\begin{cases}
\frac{\partial}{\partial t} G^f(u, t) \, = \, -f \l \lambda (1-u) \r G^f(u, t) \\
G^f(u, 0) \, = \, 1,
\end{cases}
\end{align}
and has the form
\begin{equation}
G^f(u, t) \, = \, e^{-tf \l \lambda (1-u) \r}.
\label{212}
\end{equation}
\end{te}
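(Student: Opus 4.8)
The plan is to proceed in three stages: first derive the difference-differential system \eqref{21} from the infinitesimal description \eqref{11}, then convert it into the stated evolution equation for the probability generating function, and finally solve that equation in closed form.

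For the first stage I would exploit the independence and stationarity of the increments asserted in property (i). Conditioning on the value of $N^f$ at time $t$ and on the number of events occurring in $[t,t+dt)$, the law of total probability gives $p_k^f(t+dt) = \sum_{m=0}^k p_{k-m}^f(t)\,\Pr\{N^f[t,t+dt)=m\}$. Inserting the infinitesimal probabilities \eqref{11} — where the $m=0$ term contributes $1-f(\lambda)\,dt$ by virtue of the representation \eqref{12}, while each $m\geq 1$ term contributes $dt\,\frac{\lambda^m}{m!}\int_0^\infty e^{-\lambda s}s^m\,\nu(ds)$ — then subtracting $p_k^f(t)$, dividing by $dt$ and letting $dt\to 0$ produces exactly \eqref{21}. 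The initial condition simply records that the process starts from the origin, $N^f(0)=0$.

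For the second stage I would multiply \eqref{21} by $u^k$ and sum over $k\geq 0$. The left-hand side becomes $\frac{\partial}{\partial t}G^f(u,t)$ and the first term on the right becomes $-f(\lambda)\,G^f(u,t)$. In the convolution term I would interchange the order of the two summations and set $j=k-m$, which factorises the double sum as $G^f(u,t)\sum_{m\geq 1}\frac{(\lambda u)^m}{m!}\int_0^\infty e^{-\lambda s}s^m\,\nu(ds)$. The decisive step is to interchange the remaining sum with the integral and recognise
\begin{align}
\sum_{m\geq 1}\frac{(\lambda u s)^m}{m!} \, = \, e^{\lambda u s}-1,
\end{align}
so that the $m$-series collapses to $\int_0^\infty\big(e^{-\lambda s(1-u)}-e^{-\lambda s}\big)\,\nu(ds) = f(\lambda)-f\big(\lambda(1-u)\big)$, again by the integral representation \eqref{12}. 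The two occurrences of $f(\lambda)$ then cancel, leaving the homogeneous equation $\frac{\partial}{\partial t}G^f = -f(\lambda(1-u))\,G^f$ together with $G^f(u,0)=1$.

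The third stage is immediate: for each fixed $u$ this is a first-order linear ordinary differential equation in $t$ with constant coefficient $-f(\lambda(1-u))$, whose unique solution meeting the initial datum is precisely \eqref{212}. I expect the only genuine subtlety to lie in justifying the two interchanges of summation and integration; this is controlled by the L\'evy integrability condition $\int_0^\infty (s\wedge 1)\,\nu(ds)<\infty$, which for $u\in(-1,1)$ guarantees absolute convergence of $\int_0^\infty\big(e^{-\lambda s(1-u)}-e^{-\lambda s}\big)\,\nu(ds)$ — the integrand being $O(s)$ near the origin and exponentially small at infinity — while the nonnegativity of all terms lets Tonelli's theorem license the rearrangements throughout.
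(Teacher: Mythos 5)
Your proposal is correct and follows essentially the same route as the paper: the law of total probability with the infinitesimal jump distribution \eqref{11} to get \eqref{21}, then multiplication by $u^k$, interchange of summations, and the identification $\int_0^\infty\bigl(e^{-\lambda s(1-u)}-e^{-\lambda s}\bigr)\nu(ds)=f(\lambda)-f(\lambda(1-u))$ via \eqref{12}, followed by solving the resulting first-order ODE. Your added remarks on justifying the interchanges via the L\'evy integrability condition are a welcome point of rigor the paper leaves implicit.
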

\begin{proof}
Since $N^f(t)$ has independent increments and the distribution of jumps is given by \eqref{11} we can write
\begin{align}
p_k^f(t+dt) \, = \, & \Pr \ll N^f[t+dt) = k \rr \, = \, \Pr \ll \bigcup_{j=0}^k \ll N^f(t) = j, N^f[t, t+dt) = k-j \rr \rr \notag \\
= \, &  \sum_{j=0}^{k-1} \Pr \ll N^f(t) = j \rr dt \frac{\lambda^{k-j}}{(k-j)!} \int_0^\infty e^{-\lambda s} s^{k-j} \nu(ds) \notag \\
& + \Pr \ll N^f(t) =k \rr \l 1-dt \int_0^\infty \l 1-e^{-\lambda s}  \r \nu(ds) \r.
\end{align}
A simple expansion permits us to obtain, in the limit, equation \eqref{21}. From equation \eqref{21} we have that
\begin{align}
\frac{\partial}{\partial t} G^f(u, t) \, = \, & \sum_{k=0}^\infty u^k \frac{d}{d t} p_k^f(t) \notag \\
 = \, & -f(\lambda) \sum_{k=0}^\infty u^k p_k^f(t) + \sum_{k=1}^\infty u^k \sum_{m=1}^k \frac{\lambda^m}{m!} p_{k-m}^f(t) \int_0^\infty e^{-s\lambda} s^m \nu(ds) \notag \\
 = \,  & -f(\lambda) G^f(u, t) + \sum_{m=1}^\infty \frac{\lambda^m}{m!} \int_0^\infty e^{-s\lambda }s^m \nu(ds) \sum_{k=m}^\infty u^k p_{k-m}^f(t) \notag \\
 = \, & -f(\lambda) G^f(u, t) + G^f(u, t) \int_0^\infty \l e^{-s\lambda(1-u)} - e^{-s\lambda} \r \nu(ds) \notag \\
 = \, & -G^f(u, t) \int_0^\infty \l 1-e^{-s\lambda(1-u)} \r \nu(ds) \notag \\
 = \, & -G^f(u, t) \, f(\lambda(1-u)).
\end{align}
In the last step, we take into account the representation \eqref{12} of the Bern\v{s}tein functions.
\end{proof}
\begin{os}
The appearance of $p_{k-j}(t)$, $k \geq j \geq 2$, in \eqref{21} makes the master equation of the state probabilities $p_k^f(t)$, substantially different from the case of the classical Poisson process. This fact is related to the possibility of jumps of arbitrary height. We also observe that
\begin{align}
N^f(t) \, \stackrel{\textrm{\normalfont law}}{=} \, N \l H^f(t) \r
\label{26}
\end{align}
where $H^f$ is the subordinator with Laplace transform \eqref{15}. This can be ascertained by evaluating the p.g.f. of $N \l H^f(t) \r$, $t>0$, as follows
\begin{align}
\mathds{E}u^{N \l H^f(t) \r} \, = \, & \sum_{k=0}^\infty u^k \int_0^\infty \Pr \ll N(s) = k \rr \Pr \ll H^f(t) \in ds \rr \notag \\
= \, & \int_0^\infty e^{-s\lambda(1-u)}\Pr \ll H^f(t) \in ds \rr \notag \\
= \, &G^f(u, t).
\end{align}
\end{os}
In view of \eqref{26} we can write the distribution of $N^f(t)$ as
\begin{align}
\Pr \ll N^f(t) = k \rr \, = \, & \Pr \ll N \l H^f(t) \r = k \rr \, = \, \int_0^\infty e^{-\lambda s} \frac{(\lambda s)^k}{k!} \Pr \ll H^f(t) \in ds \rr \notag \\
= \, & \frac{(-1)^k}{k!} \frac{d^k}{d u^k} \int_0^\infty e^{-\lambda s u} \Pr \ll H^f(t) \in ds \rr \bigg|_{u=1} \notag \\
= \, & \frac{(-1)^k}{k!} \frac{d^k}{d u^k} e^{-tf(\lambda u)} \bigg|_{u=1}, \qquad k \geq 0.
\label{219}
\end{align}
\begin{os}
The equation \eqref{21} can alternatively be written as
\begin{align}
\frac{d}{dt} p_k^f(t)  \, = \, -f \l \lambda \l I-B \r \r p_k^f(t), \qquad t>0, k \geq 0,
\end{align}
where $B$ is the shift operator such that $Bp_k^f(t) = p_{k-1}^f(t)$. This can be shown as follows:
\begin{align}
& -f \l \lambda (I-B) \r p_k^f(t) \notag \\
 = \, & -\int_0^\infty \l 1-e^{-\lambda  s(I-B)} \r \nu(ds) \, p_k^f(t) \notag \\
= \, & -  \int_0^\infty \l 1-e^{-\lambda s} \sum_{m=0}^\infty \frac{\l \lambda s B \r^m}{m!}  \r \nu(ds) \, p_k^f(t) \notag \\
= \, &-\int_0^\infty \l p_k^f(t) - e^{-\lambda s} \sum_{m=0}^k \frac{(\lambda s)^m}{m!} p_{k-m}^f(t) \r \nu(ds) \notag \\
= \, & - \int_0^\infty \l 1-e^{-\lambda s} \r \nu(ds) p_k^f(t) + \sum_{m=1}^k \frac{\lambda^m}{m!} p_{k-m}^f(t) \int_0^\infty e^{-\lambda s} s^m \nu(ds) \notag \\
= \, & - f(\lambda) p_k^f(t) + \sum_{m=1}^k \frac{\lambda^m}{m!} p_{k-m}^f(t) \int_0^\infty e^{-\lambda s} s^m \nu(ds).
\label{28}
\end{align}
Clearly \eqref{28} coincides with the right-hand member of \eqref{21}.
\end{os}
A further representation of $N^f(t)$, $t>0$, can be obtained as the limit of a suitable compound Poisson process.
\begin{te}
Let
\begin{equation}
u(n) \, = \, \int_0^\infty \Pr \ll N(s) \geq n \rr \, \nu(ds), \qquad n \in \mathbb{N},
\end{equation}
where $N(s)$, $s>0$, is a homogeneous Poisson process with rate $\lambda >0$. The compound Poisson process
\begin{align}
Z_n(t) \, = \, \sum_{j=1}^{N \l \frac{t}{\lambda} \, u(n)  \r } X_j, \qquad t>0,
\end{align}
where $X_j$, $j =1, 2,  \dots  $, are discrete i.i.d. r.v.'s with probability law
\begin{align}
\Pr \ll X_j = k \rr \, = \, \frac{1}{u(n)} \int_0^\infty \Pr \ll N(s) = k \rr \,  \, \nu(ds) , \qquad  k \geq n \in \mathbb{N}, \forall j = 1,2, \dots,
\label{211}
\end{align}
converges in distribution to the subordinated Poisson process $N^f(t)$ as $n \to 0$. In other words,
\begin{align}
N^f(t) \, \stackrel{\textrm{\normalfont law}}{=} \, N \l H^f(t) \r \, \stackrel{\textrm{\normalfont law}}{=} \, \lim_{n \to 0} Z_n(t).
\end{align}
\end{te}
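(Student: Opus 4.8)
The plan is to reduce everything to probability generating functions and to close the argument with the continuity theorem for $\mathbb{Z}_+$-valued random variables. First I would note that, for each fixed $n$, $Z_n(t)$ is an ordinary compound Poisson variable: the number of summands $N \l \frac{t}{\lambda} u(n) \r$ is Poisson distributed with parameter $\lambda \cdot \frac{t}{\lambda} u(n) = t\, u(n)$, while the $X_j$ are i.i.d.\ with generating function $\phi_n(u) = \mathds{E} u^{X_1}$. Conditioning on the number of jumps gives the standard compound-Poisson form
\begin{align}
\mathds{E} u^{Z_n(t)} \, = \, e^{t\, u(n) \l \phi_n(u) - 1 \r}, \qquad |u| < 1 .
\end{align}

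Next I would make the exponent explicit. By the law \eqref{211} of $X_j$ and the definition of $u(n)$, both $u(n)\phi_n(u)$ and $u(n)$ are $\nu$-integrals of partial sums of the Poisson law, so their difference telescopes into
\begin{align}
t\, u(n)\l \phi_n(u) - 1 \r \, = \, t \int_0^\infty \sum_{k=n}^\infty \l u^k - 1 \r \Pr \ll N(s) = k \rr \, \nu(ds) .
\end{align}
The decisive observation is that the $k=0$ term contributes nothing, since $u^0 - 1 = 0$; hence sending the cutoff to its minimal value $n \to 0$ is the same as summing over all $k \geq 0$, and the inner series collapses to $\sum_{k=0}^\infty u^k \Pr\ll N(s)=k\rr - 1 = e^{-s\lambda(1-u)} - 1$. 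Invoking the integral representation \eqref{12} of the Bern\v{s}tein function, I obtain
\begin{align}
\lim_{n \to 0} t\, u(n) \l \phi_n(u) - 1 \r \, = \, t \int_0^\infty \l e^{-s\lambda(1-u)} - 1 \r \nu(ds) \, = \, -t f\l \lambda(1-u) \r .
\end{align}

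Combining the two displays yields $\mathds{E} u^{Z_n(t)} \to e^{-t f(\lambda(1-u))} = G^f(u,t)$, the generating function of $N^f(t)$ recorded in \eqref{212}. Since $Z_n(t)$ and $N^f(t) \stackrel{\textrm{law}}{=} N \l H^f(t) \r$ are both $\mathbb{Z}_+$-valued and the limiting function is a genuine probability generating function, the continuity theorem gives $Z_n(t) \stackrel{\textrm{law}}{\to} N^f(t)$. The hard part will be the rigorous passage to the limit inside the integral; I would justify it by dominated convergence, using the uniform bound $\left| \sum_{k=n}^\infty \l u^k - 1 \r \Pr\ll N(s)=k\rr \right| \leq 1 - e^{-s\lambda(1-u)}$, valid for $u \in [0,1)$ because every summand is non-positive, together with the fact that the dominating function is $\nu$-integrable precisely since $f(\lambda(1-u)) < \infty$ for a Bern\v{s}tein function. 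A secondary point worth stating explicitly is that, jumps of height $0$ being invisible, the formal limit $n \to 0$ coincides with the minimal integer cutoff, so beyond this interchange no genuine analytic difficulty arises.
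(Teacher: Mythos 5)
Your proposal is correct and follows essentially the same route as the paper's proof: both compute the compound-Poisson probability generating function of $Z_n(t)$, cancel the normalizing factor $u(n)$ against the law \eqref{211}, and pass to the limit $n\to 0$ inside the $\nu$-integral to recover $e^{-tf(\lambda(1-u))}$. Your added remarks on the vanishing $k=0$ term and the dominated-convergence justification only make explicit what the paper leaves implicit.
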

\begin{proof}
The p.g.f. of $Z_n(t)$ writes
\begin{align}
\mathds{E}u^{Z_n(t)} \, = \, & e^{-tu(n) \l 1-\mathds{E}u^X \r} \notag \\
= \, & \exp \ll -t u(n) \sum_{k=n}^\infty \l 1-u^k \r \Pr \ll X = k \rr \rr \notag \\
\stackrel{\eqref{211}}{=} \, & \exp \ll -t u(n) \sum_{k=n}^\infty \l 1-u^k \r \frac{1}{u(n)} \int_0^\infty \Pr \ll N(s) = k \rr   \, \nu(ds) \rr \notag \\
= \, & \exp \ll -t \int_0^\infty \sum_{k=n}^\infty \l 1-u^k \r \Pr \ll N(s) = k \rr \nu(ds)  \rr.
\label{asterisco}
\end{align}
By taking the limit for $n \to 0$ of \eqref{asterisco} we have that
\begin{align}
\lim_{n \to 0} \mathds{E}u^{Z_n(t)} \, = \, & \exp \ll -t \int_0^\infty \sum_{k=0}^\infty \l 1-u^k \r \Pr \ll N(s) = k \rr \nu(ds) \rr \notag \\
= \, & \exp \ll-t \int_0^\infty \l 1-e^{-\lambda s(1-u)} \r \nu(ds) \rr \notag \\
= \, & e^{-tf(\lambda(1-u))}.
\end{align}
\end{proof}
\begin{os}
If we take into account processes whose state probabilities satisfy the time-fractional equation
\begin{align}
\frac{d^\nu}{dt^\nu} p_k^f(t) \, = \,  -f(\lambda) p_k^f(t) + \sum_{m=1}^k \frac{\lambda^m}{m!} p_{k-m}^f (t) \int_0^\infty e^{-s\lambda} s^m \nu (ds), \qquad k \geq 0, t>0,
\label{fame}
\end{align}
for $\nu \in (0,1)$ the corresponding p.g.f. has the form
\begin{align}
G^f_\nu(u, t) \, = \, E_{\nu,1} \l -t^\nu \int_0^\infty \l 1-e^{-s\lambda(1-u)} \r \nu(ds) \r
\label{due20}
\end{align}
where $E_{\nu,1}(x)$ is the Mittag-Leffler function and the fractional derivative appearing in \eqref{fame} must be understood in the Caputo sense. For the space-fractional Poisson process $f(\lambda) = \lambda^\alpha$, $0 < \alpha < 1$, the distribution of the process related to \eqref{due20} is explicitly given by formula (2.29) of \citet{orspolspl}. The processes whose distribution is governed by \eqref{fame} admits the following representation
\begin{align}
N \l H^f \l L^\nu (t) \r \r, \qquad t>0,
\end{align}
where $L^\nu$ and the stable subordinator $H^\nu$ are related by
\begin{align}
\Pr \ll L^\nu(t) > x \rr \, = \, \Pr \ll H^\nu(x) < t \rr.
\end{align}
\end{os}
\section{Hitting-times of the subordinated Poisson process}
In this section we study the hitting-times
\begin{align}
T_k^f \, = \, \inf \ll t\geq 0 : N^f(t) \geq k \rr,
\end{align}
of the subordinated Poisson processes.
The fact that $N^f(t)$ performs jumps of random height makes $T_k^f$ substantially different from the Erlang process related to the homogeneous Poisson process. Indeed, the law of $T_k^f$ can be written down as follows
\begin{align}
& \Pr \ll T_k^f \in ds \rr /ds \notag \\
 = \, & \Pr \ll \bigcup_{j=1}^k \ll N^f(s) = k-j , \, N^f[s, s+ds) \geq j \rr \rr /ds \notag \\
= \, &  \sum_{j=1}^k \Pr \ll N^f(s) = k-j \rr \sum_{m=j}^\infty \Pr \ll N^f[s, s+ds) = m \rr /ds \notag \\
= \, & \sum_{j=1}^k \int_0^\infty \Pr \ll N(z) = k-j \rr \Pr \ll H^f(s) \in dz \rr \sum_{m=j}^\infty \frac{\lambda^m}{m!} \int_0^\infty e^{-\lambda u} u^m \, \nu(du) \notag \\
= \, & \sum_{j=1}^k \int_0^\infty \frac{\l \lambda z \r^{k-j}}{(k-j)!} e^{-\lambda z} \Pr \ll H^f (s) \in dz \rr \int_0^\infty \Pr \ll N(u) \geq j \rr \nu(du) \notag \\
= \, & \sum_{j=1}^k \frac{(-\lambda)^{k-j}}{(k-j)!}  \int_0^\infty  \frac{d^{k-j}}{d\lambda^{k-j}} e^{-\lambda z} \Pr \ll H^f(s) \in dz \rr \int_0^\infty \Pr \ll N(u) \geq j  \rr \nu(du) \notag \\
= \, & \sum_{j=1}^k  \frac{(-\lambda)^{k-j}}{(k-j)!} \frac{d^{k-j}}{d\lambda^{k-j}} e^{-sf(\lambda)} \int_0^\infty \Pr \ll N(u) \geq j  \rr \nu(du) \notag \\
= \, & \sum_{l=0}^{k-1}  \frac{(-\lambda)^l}{l!} \l \frac{d^l}{d\lambda^l} e^{-sf(\lambda)} \r \int_0^\infty  \l 1-\sum_{r=0}^{k-l-1} \frac{(\lambda u)^r}{r!} e^{-\lambda u}  \r \nu(du).
\label{oggi}
\end{align}
The distribution of $T_k^f$ can also be obtained by observing that
\begin{align}
\Pr \ll T_k^f < s \rr \, = \, & \Pr \ll N^f(s) \geq k \rr \notag \\
= \, &\sum_{j=k}^\infty \int_0^\infty e^{-\lambda z} \frac{(\lambda z)^j}{j!} \Pr \ll H^f(s) \in dz \rr
\end{align}
and thus
\begin{align}
\Pr \ll T_k^f \in ds \rr \bigg/ ds \, = \, & \frac{d}{ds} \sum_{j=k}^\infty \int_0^\infty e^{-\lambda z} \frac{(\lambda z)^j}{j!} \Pr \ll H^f(s) \in dz \rr \notag \\
= \, & \frac{d}{ds} \int_0^\infty \Pr \ll N(z) \geq k \rr \Pr \ll H^f(s) \in dz \rr \notag \\
= \, & \frac{d}{d s } \int_0^\infty \l 1-\sum_{l=0}^{k-1} \frac{(\lambda z)^l}{l!} e^{-\lambda z} \r \Pr \ll H^f(s) \in dz \rr \notag \\
= \, & - \frac{d}{ds} \sum_{l=0}^{k-1} \frac{(-\lambda)^l}{l!} \int_0^\infty \frac{d^l}{d \lambda^l} e^{-\lambda z} \Pr \ll H^f(s) \in dz \rr \notag \\
= \, & -\frac{d}{ds} \sum_{l=0}^{k-1} \frac{(-\lambda)^l}{l!}  \frac{d^l}{d \lambda^l} e^{-sf(\lambda)}, \qquad s>0.
\label{mamma}
\end{align}
For $f(\lambda) = \lambda$ from \eqref{mamma}, we extract the Erlang distribution for the first-passage time of the Poisson process.
\begin{os}
In particular, we observe that from \eqref{oggi} and \eqref{mamma} we have that
\begin{align}
\Pr \ll T_1^f \in ds \rr \, = \, f(\lambda) e^{-sf(\lambda)} \, ds, \qquad s>0.
\label{first}
\end{align}
This proves that the waiting time of the first event for all subordinated Poisson processes is exponential. Instead
\begin{align}
\Pr \ll T_2^f \in ds \rr \, = \, e^{-sf(\lambda)} \l f(\lambda) -\lambda f^\prime (\lambda) + \lambda s f^\prime (\lambda) f(\lambda) \r \, ds, \qquad s>0,
\label{second}
\end{align}
and for $f(\lambda) = \lambda$ (ordinary Poisson case) we recover the gamma distribution with parameters $(2, \lambda)$. Result \eqref{second} can also be obtained from \eqref{oggi}. For $f(\lambda ) = \lambda^\alpha$  (space-fractional Poisson process) we have that
\begin{align}
\Pr \ll T_2^\alpha \in ds \rr \, = \, ds \lambda^\alpha e^{-s\lambda^\alpha} \l 1-\alpha + \alpha s \lambda^\alpha  \r, \qquad s>0.
\end{align}
Clearly, \eqref{second} cannot be the distribution of the sum of exponential r.v.'s \eqref{first} because the second event can also be obtained as a jump of magnitude equal to two. Finally we observe that
\begin{align}
\Pr \ll T_k^f \in ds \rr \, = \, \Pr \ll T_{k-1}^f \in ds \rr - \frac{(-\lambda)^{k-1}}{(k-1)!} \frac{d}{ds} \frac{d^{k-1}}{d\lambda^{k-1}} e^{-sf(\lambda)} ds, \, \qquad s \in (0, \infty),
\end{align}
so that the distributions of $T_k^f$ can be derived successively.
\end{os}
Here we derive the equation governing the distribution of $T_k^f$.
First we note that
\begin{align}
\mathfrak{G}^f(u, s) \, = \, \sum_{k=1}^\infty u^k \frac{\Pr \ll T_k^f \in ds \rr}{ds} \, = \, \frac{u}{1-u} f\l \lambda(1-u) \r e^{-sf(\lambda(1-u))}, \quad s>0, |u|<1.
\end{align}
This can be proved as follows
\begin{align}
\mathfrak{G}^f(u, s) \, = \,& \sum_{k=1}^\infty u^k \frac{\Pr \ll T_k^f \in ds \rr}{ds} \notag \\
= \, & \frac{d}{ds} \sum_{j=1}^\infty \sum_{k=1}^j u^k \int_0^\infty e^{-\lambda z} \frac{(\lambda z)^j}{j!} \Pr \ll H^f (s) \in dz \rr \notag \\
= \, & \frac{d}{ds} \sum_{j=1}^\infty \frac{u^{j+1}-u}{u-1} \int_0^\infty e^{-\lambda z} \frac{(\lambda z)^j}{j!} \Pr \ll H^f(s) \in dz \rr \notag \\
= \, & \frac{d}{ds} \int_0^\infty \frac{u}{u-1} \l e^{-\lambda z(1-u)} -1 \r \Pr \ll H^f(s) \in dz \rr \notag \\
= \, & \frac{u}{1-u} f(\lambda (1-u)) e^{-sf(\lambda(1-u))} 
\label{fretta}
\end{align}
\begin{te}
The probability density
\begin{align}
q_k^f(t)  \, = \, \Pr \ll T_k^f \in dt \rr \bigg/ dt
\end{align}
solves the equation
\begin{align}
f(\lambda (I-B)) q_k^f(t) \, = \, -\frac{d}{dt} q_k^f(t).
\end{align}
\end{te}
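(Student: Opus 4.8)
The plan is to pass to the probability generating function $\mathfrak{G}^f(u,s)=\sum_{k=1}^\infty u^k q_k^f(s)$ already computed in \eqref{fretta}, and to exploit the fact that under this transform the shift operator $B$ becomes multiplication by $u$. First I would observe that $Bq_k^f=q_{k-1}^f$, together with the convention $q_0^f\equiv 0$ (forced by $T_0^f=0$ a.s., so that no contribution survives for $s>0$), gives $\sum_{k\ge 1}u^k q_{k-1}^f = u\,\mathfrak{G}^f(u,s)$. Consequently the whole operator $f(\lambda(I-B))$ should translate into multiplication by the scalar $f(\lambda(1-u))$, reducing the claimed operator identity to the single scalar equation $-\partial_s\mathfrak{G}^f(u,s)=f(\lambda(1-u))\,\mathfrak{G}^f(u,s)$.

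To make the operator-to-multiplication step precise I would expand $f(\lambda(I-B))$ through the L\'evy--Bern\v{s}tein representation \eqref{12}, exactly as in the Remark leading to \eqref{28}, writing $f(\lambda(I-B))q_k^f = \int_0^\infty\big(q_k^f - e^{-\lambda s}\sum_{m\ge 0}\frac{(\lambda s)^m}{m!}q_{k-m}^f\big)\nu(ds)$. Multiplying by $u^k$, summing over $k\ge 1$ and interchanging the sum with the $\nu$-integral, the inner double sum resums (using $q_{k-m}^f=0$ for $k-m\le 0$) as $\sum_{m\ge 0}\frac{(\lambda s u)^m}{m!}\mathfrak{G}^f = e^{\lambda s u}\mathfrak{G}^f$, so that the right-hand side collapses to $\mathfrak{G}^f\int_0^\infty(1-e^{-\lambda s(1-u)})\nu(ds)=f(\lambda(1-u))\,\mathfrak{G}^f$. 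This confirms the transform rule announced above.

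It then remains to differentiate the closed form of $\mathfrak{G}^f$ in \eqref{fretta} with respect to $s$; since the $s$-dependence is carried solely by the factor $e^{-sf(\lambda(1-u))}$, one gets $\partial_s\mathfrak{G}^f = -f(\lambda(1-u))\,\mathfrak{G}^f$, which is precisely the scalar equation needed. Matching the two generating-function expressions and invoking the uniqueness of the power-series coefficients in $u$ then yields $f(\lambda(I-B))q_k^f(t)=-\frac{d}{dt}q_k^f(t)$ for every $k$. I expect the main obstacle to be the careful manipulation of the shift operator near the lower index: one must check that the convention $q_0^f\equiv 0$ is consistent and that no stray $k=0$ term is produced, and one must justify the interchange of the summation over $k$ with the $\nu$-integral, which is legitimate under the standing integrability assumption on $\nu$ in the same way as in \eqref{28}.
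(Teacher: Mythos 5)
Your proposal is correct and follows essentially the same route as the paper's own proof: both pass to the generating function $\mathfrak{G}^f(u,t)$, show via the L\'evy--Bern\v{s}tein expansion of $f(\lambda(I-B))$ (with the convention $q_0^f\equiv 0$) that the operator acts as multiplication by $f(\lambda(1-u))$, and then conclude by differentiating the closed form \eqref{fretta} in $t$ and matching power-series coefficients.
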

\begin{proof}
Since
\begin{align}
f(\lambda (I-B)) q_k^f(t) \, = \, f(\lambda) q_k^f(t) - \sum_{m=1}^{k-1} \int_0^\infty e^{-\lambda s} \frac{(\lambda s)^m}{m!} \nu(ds) q_{k-m}^f(t)
\end{align}
we can write, since $q_0(t) = 0$, for $t>0$,
\begin{align}
& \sum_{k=1}^\infty u^k f(\lambda (I-B)) q_k^f(t) \notag \\
 = \, & f(\lambda) \mathfrak{G}^f (u, t) - \sum_{k=1}^\infty u^k \sum_{m=1}^k \int_0^\infty e^{-\lambda s} \frac{(\lambda s)^m}{m!} \nu(ds) q_{k-m}^f(t) \notag \\
 = \, & f(\lambda) \mathfrak{G}^f (u, t) - \sum_{m=1}^\infty \sum_{k=m}^\infty \l \int_0^\infty e^{-\lambda s} \frac{(\lambda s)^m}{m!} \nu(ds) \r u^k q_{k-m}^f (t) \notag \\
 = \, & f(\lambda) \mathfrak{G}^f (u, t)- \mathfrak{G}^f(u, t) \int_0^\infty e^{-\lambda s} \l e^{u\lambda s} -1 \r \nu(ds) \notag \\
 = \, & f \l \lambda (1-u) \r \, \mathfrak{G}^f(u, t).
\end{align}
From \eqref{fretta} we get
\begin{align}
f \l \lambda (1-u) \r \mathfrak{G}^f(u, t) \, = \, -\frac{d}{dt} \mathfrak{G}^f(u, t),
\end{align}
which completes the proof.
\end{proof}

\section{Some particular cases}
In this section, we specialize the function $f$ in order to analyse some particular cases of $N^f(t)$, $t>0$.
\subsection{The space-fractional Poisson process}
If
\begin{align}
\nu(ds) \, = \, \frac{\alpha s^{-\alpha -1}}{\Gamma (1-\alpha)} ds, \qquad \alpha \in (0,1),
\end{align}
we obtain the space-fractional Poisson process $N^\alpha (t)$, $t>0$, studied in \citet{orspolspl}. The distributions of jumps \eqref{11} and \eqref{12} specialize to
\begin{align}
\Pr \ll N^\alpha [t, t+dt) = k \rr \, = \, &\begin{cases} \frac{(-1)^{k+1}\lambda^\alpha}{k!}  \alpha (\alpha -1) \cdots (\alpha - k +1) dt + o (dt), \quad &k > 0 \\
 1-\lambda^\alpha dt + o (dt), & k =0, \end{cases} \notag \\
 = \, & \lambda^\alpha  dt \frac{(-1)^{k+1} \Gamma (\alpha +1)}{k! \Gamma (\alpha +1-k)}.
\label{41}
\end{align}
since $f(\lambda) = \lambda^\alpha$.
The distribution of $N^\alpha$ can be written in three different ways as
\begin{align}
p_k^\alpha (t) \, = \, & \Pr \ll N^\alpha (t) = k \rr \, = \, \frac{(-1)^k}{k!} \sum_{r=0}^\infty \frac{(-\lambda^\alpha t)^r}{r!} \frac{\Gamma (\alpha r +1)}{\Gamma (\alpha r+1-k)} \notag \\
= \, &\frac{(-1)^k}{k!} \sum_{r=0}^\infty \frac{(-\lambda^\alpha t)^r}{r!} (\alpha r) (\alpha r -1) \cdots (\alpha r-k+1) \notag \\
= \, & \frac{(-1)^k}{k!} \frac{d^k}{du^k} e^{-t\lambda^\alpha u^\alpha} \bigg|_{u=1}
\label{42}
\end{align}
and we note that the probabilities \eqref{41} can be obtained directly from \eqref{42}.
\begin{os}
In light of \eqref{42} the distribution of the space-fractional Poisson process has the following alternative form
\begin{align}
p_k^\alpha (t) \, = \, \frac{e^{-\lambda^\alpha t}}{k!} \left[ c_{k, k}t^k + c_{k-1, k}t^{k-1} + \cdots + c_{2, k}t^2 + c_{1, k}t \right]
\label{43}
\end{align}
where the coefficients $c_{j, k}$, $j=1, \dots k,$ can be computed by means of successive derivatives.
In particular, we have that
\begin{align}
& c_{k,k} \, = \, \l \alpha \lambda^\alpha  \r^k, &c_{k-1, k} \, = \, \alpha^{k-1} \l 1-\alpha \r \frac{k(k-1)}{2} (\lambda^\alpha )^{k-1}, \notag  \\
& c_{2, k} \, = \, \l \lambda^\alpha  \r^2 \alpha^2 \prod_{j=1}^{k-2} \l j-\alpha \r \frac{k(k-1)}{2},  & c_{1, k} \, = \, \alpha \lambda^\alpha \prod_{j=1}^{k-1} (j-\alpha).
\label{44}
\end{align}
For $\alpha = 1$ all the coefficients $c_{j, k}$, $j=1, \dots, k-1,$ are equal to zero and we recover from \eqref{43} the distribution of the homogeneous Poisson process. The coefficients \eqref{44} are sufficient to obtain $p_j^\alpha (t)$, $1 \leq j \leq 4$ as
\begin{align}
\begin{cases}
p_2^\alpha (t) \, = \, &\frac{e^{-\lambda^\alpha t}}{2} \left[\l  \lambda^{\alpha} \alpha t \r^2  + \alpha (1-\alpha) \lambda^\alpha t \right] \\
p_3^\alpha (t) \, = \,& \frac{e^{-\lambda^\alpha t}}{3!} \left[ \l \lambda^\alpha \alpha t \r^3 + 3 \l \lambda^\alpha \alpha t \r^2 (1-\alpha) + \l \lambda^\alpha \alpha t \r \l 1-\alpha \r (2-\alpha) \right] \\
p_4^\alpha (t) \, = \, &\frac{e^{-\lambda^\alpha t}}{4!} \left[ \l \alpha \lambda^\alpha  t \r^4 + 6 \l \lambda^\alpha \alpha t \r^3 (1-\alpha) + 6 \l \alpha \lambda^\alpha t \r^2 (1-\alpha) (2-\alpha) \right. \\
&\left. + \lambda^\alpha \alpha t (1-\alpha)(2-\alpha) (3-\alpha) \right]
\end{cases}
\end{align}
\end{os}
\begin{os}
In light of the independence of increments for the space-fractional Poisson process we have that, for $0 \leq r \leq k$ and $0 \leq s \leq t$,
\begin{align}
\Pr \ll N^\alpha (s) = r | N^\alpha (t) = k \rr \, = \, & \frac{\Pr \ll N^\alpha (s) = r \rr \Pr \ll N^\alpha (t-s) = k-r \rr}{\Pr \ll N^\alpha (t) = k \rr} \notag \\
= \, & \left. \binom{k}{r} \frac{\frac{d^r}{du^r} e^{-s\lambda^\alpha u^\alpha} \frac{d^{k-r}}{du^{k-r}} e^{-(t-s)\lambda^\alpha u^\alpha}}{\frac{d^k}{du^k} e^{-\lambda^\alpha tu^\alpha}} \right|_{u=1} \notag \\
= \, & \binom{k}{r} \frac{\sum_{j=1}^r c_{j,r} s^j \; \sum_{n=1}^{k-r} c_{n, k-r}(t-s)^n}{\sum_{l=1}^k c_{l, k} t^l},
\label{nonserve}
\end{align}
where we used \eqref{43}.
For $\alpha =1$ we get that $c_{r,r}, c_{k-r, k-r}, c_{k, k} \neq 0$ and $c_{j,r} = c_{n, k-r} = c_{l, k} = 0$, for $j < r, \, n < k-r, \, l < k $ and thus we recover from \eqref{nonserve} the binomial distribution.
\end{os}
In the time interval $[0,t]$ the instants of occurences of the upward jumps are denoted by $\tau_j^{l_j}$, $1 \leq j \leq r$, $l_j \geq 1$, where $r$ is the number of jumps in $[0,t]$ and $l_j$ is the height of the $j$-th jump.
We can write the following distribution, for $r \leq k$,
\begin{align}
\Pr \ll \bigcap_{j=1}^r \ll \tau_j^{l_j} \in dt_j \rr \bigg| N^\alpha (t) = k \rr \, = \, \frac{k! \l \lambda^\alpha \Gamma(\alpha +1) \r^r (-1)^{k+r} \prod_{j=1}^r \frac{dt_j}{l_j! \Gamma (\alpha +1 - l_j)}}{\sum_{n=1}^k c_{n,k}t^n} 
\label{quaot}
\end{align}
for $0< t_1 < \cdots < t_r < t$. The distribution \eqref{quaot} can be evaluated by considering that
\begin{align}
& \Pr \ll \bigcap_{j=1}^r \ll \tau_j^{l_j} \in dt_j \rr \bigg| N^\alpha (t) = k \rr \notag \\
 = \, & \frac{1}{\Pr \ll N^\alpha (t) = k \rr} \Pr \ll \bigcap_{j=1}^{r+1} \ll N^\alpha [t_{j-1}, t_j) = 0, N^\alpha [t_j, t_j+dt_j) = l_j \rr \rr,
\end{align}
where $t_0 =0$ and $t_{r+1} = t$. Since the space-fractional Poisson process has independent increments and in view of the transition probabilities \eqref{41}, we arrive at \eqref{quaot}.
If $N^\alpha (t) = k$, and $l_j=1$, $\forall j$, we have that
\begin{align}
\Pr \ll \bigcap_{j=1}^k \ll \tau_j^1 \in dt_j \rr \bigg| N^\alpha (t) = k \rr \, = \, \frac{k! \l \alpha \lambda^\alpha \r^k}{\sum_{j=1}^k c_{j,k}t^j} \prod_{j=1}^k dt_j
 \label{412}
\end{align}
on the simplex
\begin{equation}
 S_t = \ll t_i, i =1, \dots, k : 0< t_1 < t_2 < \cdots < t_k < t \rr.
 \end{equation}
Clearly, for $\alpha =1$, we retrieve from \eqref{412} the uniform distribution on the set $S_t$.
Since the coefficients $c_{j,k}$ can be calculated in some specific cases, the distribution can be written down explicitly for small values of $k$. For example, for $k=2$ we have that
\begin{align}
&\Pr \ll \bigcap_{j=1}^2 \ll \tau_j^1 \in dt_j \rr \bigg| N^\alpha (t) = 2 \rr \, = \, \frac{\l \alpha \lambda^\alpha \r^2 \, dt_1dt_2}{\l \alpha \lambda^\alpha t \r^2 + \alpha (1-\alpha) \lambda^\alpha t}, \quad &0 < t_1 < t_2 < t, \\
&\Pr \ll \tau_1^2 \in dt_1 | N^\alpha (t) = 2 \rr = \frac{\frac{1}{2} \alpha (1-\alpha)\lambda^\alpha dt_1}{ \l \alpha \lambda^\alpha t \r^2 + \alpha (1-\alpha) \lambda^\alpha t}, & 0 < t_1 < t.
\end{align}

\subsection{Poisson process with a relativistic (tempered) stable subordinator}
In the case the L\'evy measure has the form
\begin{align}
\nu(ds) \, = \, \frac{\alpha s^{-\alpha -1}e^{-\theta s}}{\Gamma (1-\alpha)} ds, \qquad \theta > 0, 0<\alpha < 1,
\end{align}
we obtain an extension of the space-fractional Poisson process. This new Poisson process has the form $N^{\alpha, \theta}(t) \stackrel{\textrm{law}}{=} N \l H^{\alpha, \theta} (t) \r$, where $H^{\alpha, \theta}$ is the relativistic or tempered stable subordinator. Such a process is called relativistic since it appeared in the study of the stability of relativistic matter (see \citet{lieb}). From \eqref{212} we obtain the p.g.f. as
\begin{align}
G^{\alpha, \theta} (u, t) \, = \, &  \exp \ll -t \int_0^\infty \l 1-e^{-\lambda(1-u)s} \r \frac{\alpha s^{-\alpha -1 }e^{-\theta s}}{\Gamma (1-\alpha)}ds \rr \notag \\
= \, & e^{-t \ll \left[ \theta + \lambda(1-u) \right]^\alpha -\theta^\alpha \rr} \notag \\
= \, & e^{\theta^\alpha t} \sum_{k=0}^\infty \frac{\left[ -t \l \theta + \lambda(1-u) \r \right]^\alpha}{k!} \notag \\
= \, & e^{\theta^\alpha t} \sum_{k=0}^\infty \frac{\left[ -t \l \theta + \lambda \r^\alpha \right]^k}{k!} \l 1-\frac{\lambda u}{\theta + \lambda} \r^{\alpha k} \notag \\
= \, & e^{\theta^\alpha t} \sum_{k=0}^\infty \frac{\l -t (\theta + \lambda) \r^k}{k!} \sum_{m=0}^\infty \frac{\Gamma (\alpha k +1)}{\Gamma (\alpha k+1-m)m!} \l -\frac{\lambda u}{\theta + \lambda} \r^m \notag \\
= \, &\sum_{m=0}^\infty u^m \left[ \frac{(-1)^m}{m!} \frac{\lambda^m e^{\theta^\alpha  t}}{\l \theta + \lambda \r^m} \sum_{k=0}^\infty \frac{\left[ -t (\theta + \lambda)^\alpha \right]^k}{k!} \frac{\Gamma \l \alpha k +1 \r }{\Gamma \l \alpha k + 1 - m \r}\right].
\label{422}
\end{align}
From \eqref{422} we extract the distribution of $N^{\alpha, \theta}(t)$, $t>0$, as follows
\begin{align}
\Pr \ll N^{\alpha, \theta} (t) = m \rr \, = \, \frac{(-1)^m}{m!} \frac{\lambda^m e^{\theta^\alpha t}}{\l \theta + \lambda \r^m} \sum_{k=0}^\infty \frac{\l -t \l \lambda + \theta \r^\alpha \r^k}{k!} \frac{\Gamma (\alpha k +1)}{\Gamma (\alpha k + 1 -m)}, \quad m \geq 0.
\label{423}
\end{align}
For $\theta = 0$, formula \eqref{423} yields the distribution of the space-fractional Poisson process (see formula (1.2) of \citet{orspolspl}). An alternative form of \eqref{423} is
\begin{align}
\Pr \ll N^{\alpha, \theta} (t) = m \rr \, = \, \frac{(-1)^m}{m!} \l \frac{\lambda}{\lambda + \theta} \r^m e^{\theta^\alpha t} \frac{d^m}{du^m} e^{-tu^\alpha \l \theta + \lambda \r^\alpha} \bigg|_{u=1}
\label{424}
\end{align}
and can be derived either from \eqref{423} or from \eqref{219}. From \eqref{424} (and also from \eqref{11}) we have that, for $m \geq 1$,
\begin{align}
\Pr \ll N^{\alpha, \theta} [t, t+dt) = m \rr \, = \, \frac{(-1)^{m+1}\l \frac{\lambda}{\lambda + \theta} \r^m}{m!}  \l \lambda + \theta \r^\alpha \alpha (\alpha -1) \cdots (\alpha -m+1) dt
\label{420}
\end{align}
and this represents the distribution of the jumps during $[t, t+dt)$. Formula \eqref{420} shows that high jumps have less probability of occurring than in the space-fractional Poisson process.
\begin{os}
We notice that
\begin{align}
&\mathds{E} N^{\alpha, \theta} (t) \, = \, \alpha \lambda \theta^{\alpha -1}t, \notag \\
& \normalfont \textrm{Var} \left[ N^{\alpha, \theta} (t) \right] \, = \, \alpha \lambda  \theta^{\alpha -2} \l \lambda (1-\alpha) + \theta \r t, \notag \\
& \normalfont \textrm{Cov} \left[ N^{\alpha, \theta}(t) N^{\alpha, \theta}(s) \right] \, = \, \alpha \lambda  \theta^{\alpha -2} \l \lambda (1-\alpha) + \theta \r \, \l s \wedge  t \r.
\label{4222}
\end{align}
From \eqref{4222}, it is apparent that in the space-fractional Poisson process ($\theta = 0$) the mean values diverge.
\end{os}

\subsection{Poisson process with gamma subordinator}
For the L\'evy measure 
\begin{align} 
\nu(ds)=\frac{e^{-s}}{s}ds, \qquad s>0, 
\end{align}
the distribution of the related Poisson process has a particularly simple and interesting form, since it is the negative binomial. We note that the Bern\v{s}tein function corresponding to the L\'evy measure $\nu(ds) = \frac{e^{-s}}{s}ds$ is
\begin{align}
f(x) \, = \, \int_0^\infty \l 1-e^{-sx} \r \frac{e^{-s}}{s} ds \, = \, \log(1+x).
\end{align}
Therefore, the p.g.f. \eqref{212} reduces to the form
\begin{align}
G^\Gamma (u, t) \, = \, e^{-t\log(1+\lambda(1-u))} \, = \, \l 1+\lambda(1-u) \r^{-t},
\label{432}
\end{align}
and thus the intertime $T$ between successive clusters of events has law
\begin{align}
\Pr \ll T > t \rr \, = \, \frac{1}{(1+\lambda)^t}.
\end{align}
Formula \eqref{432} is clearly the p.g.f. of $N^\Gamma (t) \stackrel{\textrm{law}}{=} N \l H^\Gamma (t) \r$, where $H^\Gamma$ is the gamma subordinator with Laplace transform
\begin{align}
\mathds{E}e^{-\mu  H^f(t)} \, = \, \l 1+\mu \r^{-t}.
\end{align}
The distribution of $N^\Gamma (t)$, $t>0$, can be extracted from \eqref{432}.
\begin{prop}
The process $N^\Gamma (t)$, $t>0$, has the following distribution
\begin{align}
\Pr \ll N^\Gamma (t) = k \rr \, = \, & \begin{cases} \frac{\lambda^k t(t+1) \cdots (t+k-1)}{k!} \frac{1}{(\lambda +1)^{t+k}}, \qquad  &k \geq 1 \\ \frac{1}{(1+\lambda)^t}, & k=0, \end{cases} \notag \\
= \, & \frac{\lambda^k \Gamma(k+t)}{\Gamma(t) k! \l \lambda +1 \r^{t+k}}, \qquad k \geq 0.
\label{434}
\end{align}
\end{prop}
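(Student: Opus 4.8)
The plan is to read the distribution off directly from the probability generating function $G^\Gamma(u,t) = (1+\lambda(1-u))^{-t}$ already established in \eqref{432}, since $p_k^\Gamma(t)$ is the coefficient of $u^k$ in its power-series expansion about $u=0$.

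First I would factor out the value of the p.g.f.\ at $u=0$, writing
\[
G^\Gamma(u,t) \, = \, (1+\lambda)^{-t}\left(1 - \frac{\lambda u}{1+\lambda}\right)^{-t}.
\]
Next I would invoke the generalized binomial (negative-binomial) series $(1-x)^{-t} = \sum_{k=0}^\infty \frac{\Gamma(t+k)}{\Gamma(t)\,k!}\,x^k$, valid for $|x|<1$, with $x = \lambda u/(1+\lambda)$. Reading off the coefficient of $u^k$ then gives $\frac{\lambda^k\,\Gamma(k+t)}{\Gamma(t)\,k!\,(\lambda+1)^{t+k}}$, which is precisely the right-hand side of \eqref{434}; the case $k=0$ collapses to $1/(1+\lambda)^t$, matching the first line of \eqref{434}.

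An equivalent route uses the subordinator representation $N^\Gamma(t) \stackrel{\textrm{law}}{=} N(H^\Gamma(t))$ together with the fact that $H^\Gamma(t)$ has the gamma density $s^{t-1}e^{-s}/\Gamma(t)$, obtained as the inverse Laplace transform of $(1+\mu)^{-t}$. Inserting this density into the mixture formula \eqref{219}, namely $\Pr\{N^\Gamma(t)=k\} = \int_0^\infty e^{-\lambda s}\frac{(\lambda s)^k}{k!}\Pr\{H^\Gamma(t)\in ds\}$, reduces the computation to the elementary gamma integral $\int_0^\infty s^{k+t-1}e^{-s(1+\lambda)}\,ds = \Gamma(k+t)/(1+\lambda)^{k+t}$, which reproduces the same answer.

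I do not expect any serious obstacle here: both routes reduce to recognizing a standard series or evaluating one elementary integral. The only point deserving minor care is the justification of term-by-term coefficient extraction, that is, the convergence of the binomial series, which holds for $|u|<1$ because $\lambda>0$ forces $|\lambda u/(1+\lambda)|<1$.
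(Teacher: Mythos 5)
Your first route is exactly the paper's proof: factor the p.g.f.\ as $(1+\lambda)^{-t}\bigl(1-\tfrac{\lambda u}{1+\lambda}\bigr)^{-t}$, expand by the generalized binomial series, and read off the coefficient of $u^k$. The alternative via the gamma density of $H^\Gamma(t)$ is a valid bonus (the paper only alludes to it in a following remark), but the core argument coincides with the paper's.
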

\begin{proof}
The distribution of $N \l H^\Gamma(t) \r$, $t>0$, is the negative binomial (see, for example, \citet{kozu}). For, its p.g.f. is
\begin{align}
G^\Gamma (u, t) \, = \, & \l 1+\lambda (1-u) \r^{-t} \notag \\
= \, & \l 1-\frac{\lambda u}{1+\lambda} \r^{-t} \l 1+\lambda \r^{-t} \, = \,  \l 1+\lambda \r^{-t} \sum_{k=0}^\infty \frac{\Gamma (-t+1)}{k! \Gamma (-t+1-k)} \l -\frac{\lambda u}{1+\lambda} \r^k \notag \\
= \, & \sum_{k=0}^\infty u^k \left[ \frac{\lambda^k \Gamma (t+k)}{k! \Gamma (t)}  \frac{1}{\l 1+\lambda \r^{t+k}} \right].
\end{align}
\end{proof}
\begin{os}
The distribution \eqref{434} of $N^\Gamma (t)$ is written as
\begin{align}
\Pr \ll N^\Gamma (t) = k \rr \, = \, & \frac{\lambda^k}{\l 1+\lambda \r^{k+t}} \frac{\Gamma (k+t)}{\Gamma (t)} \frac{1}{k!} \, = \, \mathbb{E} \Pr \ll N(\mathcal{T}) = k \rr,
\end{align}
where $\mathcal{T}$ is gamma distributed with parameters $(1,t)$ (that is the distribution of $H^\Gamma$) and $N$ is a homogeneous Poisson process with parameter $\lambda$, independent from $\mathcal{T}$. Furthermore, \eqref{434} can be regarded as an extension of the negative binomial $\mathcal{B}^i$ where
\begin{align}
\Pr \ll \mathcal{B}^i \, = \, k \rr \, = \, \frac{\Gamma(i+k)}{\Gamma(i)\Gamma(k+1)} p^i q^k
\end{align}
for $i=t$, $p=1/(1+\lambda)$, $q= \lambda/(1+\lambda)$ (see also \citet{kozu}).
\end{os}
\begin{coro}
The distribution of jumps in this case has the form
\begin{align}
\Pr \ll N^\Gamma [t, t+dt) = k \rr \, = \, \begin{cases} \l \frac{\lambda}{\lambda +1} \r^k \frac{1}{k}dt, \qquad & k \geq 1, \\ 1- \log(1+\lambda)dt, & k =0, \end{cases}
\label{430}
\end{align}
as can be inferred from \eqref{11} and also from \eqref{434}. The jumps follow logarithmic distribution.
\end{coro}
\begin{os}
We observe that, for $s<t$, $r\leq k$,
\begin{align}
\Pr \ll N^\Gamma (s) = r | N^\Gamma (t) = k \rr \, = \, &  \binom{k}{r} \frac{\Gamma (t)}{\Gamma (t-s)\Gamma (s)} \frac{\Gamma (s+r)\Gamma(t-s+k-r)}{\Gamma (k+t)} \notag \\
= \, & \binom{k}{r} \frac{B(s+r, t-s+k-r) }{B(s, t-s)}.
\label{beta}
\end{align}
Furthermore, from \eqref{beta} we can write, for $0 \leq r \leq k$,
\begin{align}
\Pr \ll N^\Gamma (s) = r | N^\Gamma (t) = k \rr \, = \, & \binom{k}{r} \frac{\int_0^1 x^{s+r-1} (1-x)^{t-s+k-r-1}dx}{B(s, t-s)} \notag \\
= \, & \mathds{E} \left[ \binom{k}{r}  X^{r} \l 1-X \r^{k-r} \right],
\label{stanza}
\end{align}
where $X$ is a r.v. with Beta distribution with parameter $s$ and $t-s$, that is
\begin{align}
\Pr \ll X \in dx \rr \, = \, \frac{x^{s-1}(1-x)^{t-s-1}}{B(s, t-s)} dx.
\end{align}
Formula \eqref{stanza} shows that in the gamma-Poisson process the conditional number of events at time $s<t$ is a randomized Bernoulli if $N(t) = k$.
\end{os}
\begin{os}
In view of \eqref{434}, \eqref{430}, and of the independence of the increments of the gamma-Poisson process, we have that
\begin{align}
\Pr \ll \bigcap_{j=1}^r \ll \tau_j^{l_j} \in dt_j \rr \bigg| N^\Gamma (t) = k \rr \, = \, \frac{k! \Gamma (t)}{\Gamma (t+k)} \prod_{j=1}^r \frac{dt_j}{l_j}
\label{fichissimo}
\end{align}
on the simplex $0<t_1 < t_2 < \cdots < t_r < t$ and $\sum_{j=1}^r l_j = k$.
Some special cases of \eqref{fichissimo} are
\begin{enumerate}
\item[i)] $l_j=1$, $\forall j = 1, \dots, r$, and thus $r=k$. In this case we have that
\begin{align}
\Pr \ll \bigcap_{j=1}^k \ll \tau_j^1 \in dt_1 \rr \bigg| N^\Gamma (t) = k \rr \, = \, \frac{k! \Gamma (t)}{\Gamma (t+k)} \prod_{j=1}^k dt_j, \qquad 0 < t_1 < \cdots < t_k < t;
\end{align}
\item[ii)] $l_1 = k$ and thus $r=1$ (unique jump of height $k$). Here we get
\begin{align}
\Pr \ll \tau_1^k \in dt_1 | N^\Gamma (t) = k \rr \, = \, \frac{dt_1}{k} \frac{k! \Gamma (t)}{\Gamma (t+k)}, \qquad 0 < t_1 < t;
\end{align}
\item[iii)] $k=2m$, $l_j=2$, $\forall j$, and therefore $r=m$. We have that
\begin{align}
\Pr \ll \bigcap_{j=1}^m \ll \tau_j^2 \in dt_j \rr \bigg| N^\Gamma (t) = 2m \rr \, = \, \frac{(2m)! \, \Gamma (t)}{2^m \Gamma (t+2m)} \prod_{j=1}^m dt_j, \end{align}
for $0 < t_1 < \cdots < t_m < t$.
\end{enumerate}
\end{os}
\begin{os}
From \eqref{432} we obtain the $r$-th factorial moment of $N^\Gamma(t)$, $t>0$, as
\begin{align}
\mathds{E} \left[ N^\Gamma (t) \l N^\Gamma (t) -1 \r \cdots \l N^\Gamma (t) -r+1 \r \right] \, = \, & \lambda^r t (t+1) \cdots (t+r-1).
\end{align}
While $\mathds{E}N^\Gamma (t) = \lambda t$, the variance becomes $\textrm{\normalfont Var} N^\Gamma (t) = \lambda t(\lambda +1)$ and
\begin{align}
\normalfont \textrm{Cov} \left[ N^\Gamma (t), N^\Gamma (s) \right] \, = \, \lambda (\lambda +1) (s \wedge t).
\end{align}
Furthermore, we have that
\begin{align}
&\mathds{E} \left[ \int_0^t N^\Gamma (s) ds \right] \, = \, \lambda t^2/2 \notag \\
& \normalfont \textrm{Var} \left[ \int_0^t N^\Gamma(s) ds \right] \, = \, \lambda (\lambda +1) t^3/3
\end{align}
\end{os}
\begin{os}
We can write also the following conditional mean values
\begin{align}
&\mathds{E} \left[ N^\Gamma(s) | N^\Gamma(t) = k \right] \, = \, \frac{ks}{t}, \qquad 0 < s < t, \\
& \mathds{E} \left[ N^\Gamma(s) N^\Gamma(w) | N^\Gamma(t) = k \right] \notag \\
 = \, & \frac{ks}{t}+ k (k-1) \frac{s(s+1)}{t(t+1)} + k(k-1) \frac{s(w-s)}{t(t+1)},\quad \textrm{\normalfont for }0<s<w<t \label{443} \\
 & \textrm{\normalfont Cov}\left[ N^\Gamma(s), N^\Gamma(w) | N^\Gamma(t) = k \right]  \notag   \\
  = \, & \frac{k}{t(t+1)} \l 1+\frac{k}{t} \r \, \min (s, w) \, \min(t-s, t-w).
 \label{444}
\end{align}
As a special case, we extract from \eqref{444} the conditional variance as
\begin{align}
\textrm{\normalfont Var} \left[ N^\Gamma(s) | N^\Gamma(t) = k \right] \, = \, \frac{sk(t-s)}{t(t+1)} \l 1+\frac{k}{t} \r, \qquad 0 < s < t,
\end{align}
and from \eqref{443}
\begin{align}
\mathds{E} \left[ \l N^\Gamma(s) \r^2 | N^\Gamma(t) = k \right] \, = \, \frac{s}{t} k + k(k-1) \frac{s}{t} \frac{s+1}{t+1}.
\end{align}
As a check, we observe that
\begin{align}
\textrm{\normalfont Var} N^\Gamma(s) \, = \,  & \mathds{E} \left[ \textrm{\normalfont Var} \left[ N^\Gamma(s) | N^\Gamma(t) \right] \right] + \textrm{\normalfont Var} \left[ \mathds{E} \left[ N^\Gamma(s) | N^\Gamma(t) \right] \right] \notag \\
= \, & \frac{s(t-s)}{t(t+1)} \mathds{E}N^\Gamma(t) + \frac{s(t-s)}{t^2(t+1)} \mathds{E}\l N^\Gamma (t) \r^2  + \frac{s^2}{t^2} \textrm{\normalfont Var} N^\Gamma(t) \notag \\
= \, & \frac{s}{t} \frac{t-s}{t+1} \lambda t + \frac{s}{t^2} \frac{t-s}{t+1} \l \lambda (\lambda +1)t+\lambda^2t^2 \r + \frac{s^2}{t^2} \lambda(\lambda +1) t \notag \\
= \, & \lambda (\lambda +1)s.
\end{align}
\end{os}
\begin{os}
We consider here the distribution of $N_1^\Gamma(t) - N_2^\Gamma(t)$, $t>0$, where $N_j^\Gamma$, $j=1, 2$, are independent gamma-Poisson processes. This leads to a generalization of the Skellam law of the difference of independent homogenous Poisson processes.  We have that
\begin{align}
& \Pr \ll N_1^\Gamma (t) - N_2^\Gamma (t) = r \rr \notag \\
 = \, & \sum_{k=0}^\infty \frac{\lambda^k \Gamma (k+t) \lambda^{k+r} \Gamma (k+r+t)}{(1+\lambda)^{k+t} \Gamma (t) k! (1+\lambda)^{k+r+t} (k+r)! \Gamma (t)}  \notag \\
 = \, & \frac{1}{(1+\lambda)^{2t} \Gamma^2 (t)} \sum_{k=0}^\infty \frac{\lambda^{2k+r}}{(1+\lambda)^{2k+r} k! (k+r)!} \int_0^\infty dw \int_0^\infty dz \, e^{-w-z} w^{k+t-1}z^{k+r+t-1} \notag \\
 = \, &\frac{1}{(1+\lambda)^{2t} \Gamma^2(t)}  \int_0^\infty \int_0^\infty dw \, dz \, e^{-w-z} w^{t-\frac{r}{2}-1} z^{\frac{r}{2}+t-1} \sum_{k=0}^\infty  \frac{\l \frac{\lambda}{1+\lambda}  \sqrt{wz} \r^{2k+r}}{k! (k+r)!} \notag \\
 = \, &\frac{1}{(1+\lambda)^{2t} \Gamma^2(t)} \int_0^\infty \int_0^\infty e^{-w-z} w^{t-\frac{r}{2}-1} z^{\frac{r}{2}+t-1}  I_r \l \frac{2\lambda \sqrt{wz}}{1+\lambda} \r   \notag \\
= \, & \int_0^\infty \int_0^\infty \Pr \ll N_1^u (1) - N_2^y(1) = r \rr e^{-\frac{u+y}{\lambda}} \frac{(uy)^{t-1} du dy}{\lambda^{2t} \Gamma^2(t)} \notag \\
= \, & \mathds{E} \Pr \ll N_1^U(1) -N_2^Y(1) = r \rr,
\end{align}
where $U$ and $Y$ are independent gamma r.v.'s with parameters $1$ and $t$, and $I_0(x)$ is a Bessel function.
For the reader's convenience, we recall that the Skellam distribution reads
\begin{align}
\Pr \ll N_1^\lambda (t) - N_2^\beta (t) = r \rr \, = \, e^{- \l \beta + \lambda \r t} \l \frac{\lambda}{\beta} \r^{\frac{r}{2}} I_{|r|} \l 2t\sqrt{\lambda \beta} \r, \qquad r \in \mathbb{Z},
\end{align}
for independent Poisson processes $N_1^\lambda$, $N_2^\beta$, with rate $\lambda$ and $\beta$, respectively.
\end{os}

\section*{Acknowledgement}
The authors are greatful to the referee for all suggestions which substantially improved the presentation of the paper.

\end{document}